\documentclass[12pt,halfline,a4paper]{ouparticle}
\usepackage{graphicx}
\usepackage{multirow}
\usepackage{adjustbox}
\usepackage{booktabs}
\usepackage{lscape}
\usepackage{float}
\usepackage{amsmath,amssymb,amsfonts}
\usepackage{amsthm}
\usepackage{array}
\usepackage{cite}
\usepackage{mathrsfs}
\usepackage[title]{appendix}
\usepackage{xcolor}
\usepackage{textcomp}
\usepackage{manyfoot}
\usepackage{booktabs}
\usepackage{algorithm}
\usepackage{algorithmicx}
\usepackage{algpseudocode}
\usepackage{listings}
\newtheorem{theorem}{Theorem}

\begin{document}

\title{Vertex energy distributions in regular graph structures}

\author{%
\name{H.M. Nagesh}
\address{Department of Science and Humanities \\
PES University, Bangalore, Karnataka, India}
\email{hmnagesh1982@gmail.com}
\and
\name{U. Vijaya Chandra Kumar}
\address{Department of Mathematics, School of Applied Sciences \\
REVA University, Bangalore, Karnataka, India}
\email{upparivijay@gmail.com}
\and
\name{N. Narahari}
\address{Department Mathematics \\
University College of Science \\ Tumkur University, Tumakuru, Karnataka, India}
\email{narahari\_nittur@yahoo.com}
}
\abstract{The energy of a vertex $v_i$ in a graph $G$ is defined as $\mathcal{E}_G(v_i) = |A|_{ii}$, where $A$ is the adjacency matrix of $G$, $A^*$ denotes the conjugate transpose of $A$, and $|A| = (AA^*)^{1/2}$. The total energy of the graph, $\mathcal{E}(G)$, is then the sum of the energies of all vertices: $\mathcal{E}(G) = \mathcal{E}_G(v_1) + \mathcal{E}_G(v_2) + \dots + \mathcal{E}_G(v_n)$. In this paper, we compute the vertex energy for several well-known regular graphs, including the Frucht graph, Desargues graph, Tutte-Coxeter graph, Heawood graph, Shrikhande graph, and Petersen graph.
}

\date{}

\keywords{Graph energy, vertex energy, graphs spectra, adjacency matrix, regular graph. }

\maketitle

\section{Introduction}
\label{sec1}
A simple graph \( G \) with \( n \) vertices and \( m \) edges has a vertex set given by \( V(G) = \{ v_1, v_2, \dots, v_n \} \). The degree of a vertex \( v \in V(G) \) is the number of edges that are incident to $v$. If every vertex in \( G \) has the same degree \( r \), then \( G \) is called an \( r \)-regular graph. The adjacency matrix of a graph \( G \) is a square matrix \( A = A(G) = [a_{ij}] \) of order \( n \), where \( a_{ij} = 1 \) if the vertices \( v_i \) and \( v_j \) are adjacent, and \( a_{ij} = 0 \) otherwise. 

Let $\phi(G: \lambda) = \det(\lambda I - A(G))$ is the characteristic polynomial of \( G \), where \( I \) is the identity matrix of order \( n \); and \( \lambda_1, \lambda_2, \dots, \lambda_n \) are the eigenvalues of \( G \), which are the eigenvalues of the adjacency matrix \( A(G) \). Let \( \mathcal{E}(G) \) \ represents the energy of a graph \( G \), which is the total of the absolute values of its eigenvalues \([1]\). That is, 
\[
\mathcal{E}(G) = \sum_{i=1}^{n} |\lambda_i|.
\]
More details on graph energy can be found in \([2]\).  

Recently, Arizmendi and Juarez-Romero \([3]\) first proposed the idea of a vertex's energy. According to them, the energy of a vertex \( v_i \), denoted by \( E_G(v_i) \), is defined as  
\[
\mathcal{E}_G(v_i) = |A|_{ii} \quad \text{for } 1 \leq i \leq n,  \text{and } |A| = (AA^T)^{1/2}.
\]

Since \( \mathcal{E}(G) = \text{trace}(|A(G)|) \), the energy of a graph can be computed by summing the energies of its vertices:
\[
\mathcal{E}(G) = \sum_{i=1}^{n} \mathcal{E}_G(v_i).
\]

Arizmendi et al. \cite{4} explored the fundamental properties of a vertex's energy, including certain bounds, and demonstrated in \cite{5} that it can be computed using a Coulson integral formula. Furthermore, \cite{4} provides the energy values for vertices in various graphs, such as transitive graphs, complete graphs, hypercubes, cycles, complete bipartite graphs, friendship graphs, dandelion graphs, and paths. The vertex energies of subdivision graphs, including the complete graph, complete bipartite graph, cocktail party graph, and Petersen graph, are examined in \cite{6}. Very recently, Gutman et al. \cite{7} presented a procedure for determining the energy of vertices in graphs, stating that this computation requires not only the eigenvalues but also the eigenvectors of the graph's adjacency matrix. In this paper, we determine the vertex energies of several regular graphs, including the Frucht graph, Desargues graph, Tutte–Coxeter graph, Heawood graph, Shrikhande graph, and Petersen graph. 

\section{Main results}
In this section, we determine the vertex energies of several regular graphs, including the Frucht graph, Desargues graph, Tutte–Coxeter graph, Heawood graph, Shrikhande graph, and Petersen graph. 

The following results are required for this analysis.

\begin{theorem}[\cite{4}]
Let \( G \) be a graph with \( n \) vertices. Then the vertex energy of vertex \( v_i \) is given by
\begin{equation}
\mathcal{E}_G(v_i) = \sum_{j=1}^{n} p_{ij} |\lambda_j|, \quad i = 1, \dots, n,
\end{equation}
where \( \lambda_j \) denotes the \( j \)-th eigenvalue of the adjacency matrix \( A \), and the weights \( p_{ij} \) satisfy
\[
\sum_{i=1}^{n} p_{ij} = 1, \quad \sum_{j=1}^{n} p_{ij} = 1.
\]
Moreover, \( p_{ij} = u_{ij}^2 \), where \( U = (u_{ij}) \) is the orthogonal matrix whose columns are the eigenvectors of \( A \).
\end{theorem}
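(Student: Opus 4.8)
The plan is to exploit the spectral decomposition of the adjacency matrix. Since $G$ is a simple graph, $A = A(G)$ is a real symmetric matrix, so by the spectral theorem it admits a decomposition $A = U \Lambda U^{T}$, where $\Lambda = \operatorname{diag}(\lambda_1, \dots, \lambda_n)$ collects the eigenvalues and $U = (u_{ij})$ is a real orthogonal matrix whose $j$-th column is a unit eigenvector associated with $\lambda_j$. This is the natural starting point, since $|A|$ is defined through $AA^{T}$ and $A$ being symmetric lets us rewrite $AA^{T}$ in terms of $\Lambda$.

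First I would compute $|A|$ explicitly. Because $A$ is symmetric, $A A^{T} = A^{2} = U \Lambda^{2} U^{T}$. Consider $M := U\,|\Lambda|\,U^{T}$, where $|\Lambda| = \operatorname{diag}(|\lambda_1|, \dots, |\lambda_n|)$. Then $M$ is symmetric and positive semidefinite, because its eigenvalues $|\lambda_j|$ are nonnegative, and $M^{2} = U \Lambda^{2} U^{T} = A A^{T}$. Since a positive semidefinite matrix has a unique positive semidefinite square root, we conclude $|A| = (A A^{T})^{1/2} = M = U\,|\Lambda|\,U^{T}$.

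Next I would read off the diagonal entries. Expanding the $(i,i)$ entry of $U\,|\Lambda|\,U^{T}$ gives
\[
\mathcal{E}_G(v_i) = |A|_{ii} = \sum_{j=1}^{n} u_{ij}\,|\lambda_j|\,u_{ij} = \sum_{j=1}^{n} u_{ij}^{2}\,|\lambda_j|,
\]
so putting $p_{ij} = u_{ij}^{2}$ produces the stated formula. The doubly stochastic constraints are then immediate from the orthogonality of $U$: every column of $U$ has unit norm, so $\sum_{i=1}^{n} p_{ij} = \sum_{i=1}^{n} u_{ij}^{2} = 1$; and every row of $U$ has unit norm, so $\sum_{j=1}^{n} p_{ij} = \sum_{j=1}^{n} u_{ij}^{2} = 1$.

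The only step requiring genuine care is the identification of $(A A^{T})^{1/2}$ with $U\,|\Lambda|\,U^{T}$, which rests on uniqueness of the positive semidefinite square root; one should also remark that when $A$ has repeated eigenvalues the orthogonal matrix $U$ is not unique, so the individual weights $p_{ij}$ depend on the chosen orthonormal eigenbasis even though the diagonal entries $|A|_{ii}$ do not. Everything else is routine matrix bookkeeping together with the definition of an orthogonal matrix.
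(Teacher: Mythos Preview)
Your argument is correct and is essentially the standard spectral-decomposition proof of this identity: diagonalize the symmetric matrix $A$, identify $|A|$ with $U\,|\Lambda|\,U^{T}$ via uniqueness of the positive semidefinite square root, and read off the diagonal entries. The remark about non-uniqueness of the $p_{ij}$ when eigenvalues are repeated is a nice touch.

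There is, however, nothing to compare against: the paper does not supply its own proof of this theorem. It is stated as a quoted result from reference~[4] (Arizmendi, Hidalgo, Juarez-Romero) and used as a tool in the subsequent computations. So your proposal fills in a proof the paper deliberately omits rather than offering an alternative to one it gives.
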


\begin{theorem}[\cite{4}]
Let \( G \) be a graph with \( n \) vertices. For \( k \in \mathbb{N} \), let \( \varphi_i(A^k) \) be the \( k \)-th moment of \( A \) with respect to the linear functional \( \varphi_i \). Then
\begin{equation}
\varphi_i(A^k) = \sum_{j=1}^{n} p_{ij} \lambda_j^k, \quad i = 1, \dots, n,
\end{equation}
where, the notation is the same as in Theorem 1.
\end{theorem}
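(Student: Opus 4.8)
The plan is to prove Theorem 2 by a direct appeal to the spectral decomposition of the adjacency matrix, which for a simple graph is real symmetric, and then to read off the stated identity. The first step is to recall that the linear functional $\varphi_i$ attached to the vertex $v_i$ is the vector state $\varphi_i(M) = \langle e_i, M e_i \rangle = M_{ii}$, where $e_i$ is the $i$-th standard basis vector; consequently the $k$-th moment of $A$ with respect to $\varphi_i$ is simply $\varphi_i(A^k) = (A^k)_{ii}$, and the whole problem reduces to computing the $(i,i)$ entry of $A^k$.

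Since $A = A(G)$ is real and symmetric, the spectral theorem provides an orthogonal matrix $U = (u_{ij})$ whose columns form an orthonormal eigenbasis, with $A = U \Lambda U^{T}$ and $\Lambda = \operatorname{diag}(\lambda_1, \dots, \lambda_n)$. Because $U^{T} U = U U^{T} = I$, the intermediate factors cancel when one multiplies $A$ by itself, so $A^k = U \Lambda^k U^{T}$ for every $k \in \mathbb{N}$. Extracting the $(i,i)$ entry gives
\[
\varphi_i(A^k) = (A^k)_{ii} = \sum_{j=1}^{n} u_{ij}\,\lambda_j^k\,u_{ij} = \sum_{j=1}^{n} u_{ij}^2\,\lambda_j^k = \sum_{j=1}^{n} p_{ij}\,\lambda_j^k,
\]
where $p_{ij} = u_{ij}^2$ is exactly the weight introduced in Theorem 1. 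The constraints $\sum_{i} p_{ij} = 1$ and $\sum_{j} p_{ij} = 1$ are immediate from the fact that the columns and rows of the orthogonal matrix $U$ are unit vectors, so $(p_{ij})$ is doubly stochastic, consistent with Theorem 1.

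There is no genuine obstacle here; the one point deserving a word of care is the case of repeated eigenvalues, where $U$ is not unique. Any two admissible choices differ only by an orthogonal change of basis within each eigenspace, and since $\sum_{j:\lambda_j=\mu} u_{ij}^2$ equals the $(i,i)$ entry of the spectral projection onto the $\mu$-eigenspace, the grouped weights — and hence $\varphi_i(A^k)$ — are independent of the choice. Finally, it is worth noting that running the identical computation with $\Lambda^k$ replaced by $|\Lambda| = \operatorname{diag}(|\lambda_1|, \dots, |\lambda_n|)$ yields $\mathcal{E}_G(v_i) = (|A|)_{ii} = \sum_{j} p_{ij} |\lambda_j|$; thus Theorems 1 and 2 are two instances of the single identity $\varphi_i(f(A)) = \sum_{j} p_{ij} f(\lambda_j)$, valid for any function $f$ on the spectrum, which explains why the same weights $p_{ij}$ occur in both statements.
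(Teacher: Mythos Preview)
Your proof is correct. The paper itself does not prove this statement; Theorem~2 is quoted from \cite{4} without argument, so there is no in-paper proof to compare against. Your spectral-decomposition argument is exactly the standard one that underlies the result in \cite{4}: write $A = U\Lambda U^{T}$ with $U$ orthogonal, compute $(A^k)_{ii} = \sum_j u_{ij}^2 \lambda_j^k$, and identify $p_{ij}=u_{ij}^2$. Your additional remarks about the doubly-stochastic property of $(p_{ij})$, the invariance of the grouped weights under change of orthonormal basis within an eigenspace, and the unification of Theorems~1 and~2 via $\varphi_i(f(A)) = \sum_j p_{ij} f(\lambda_j)$ are all correct and appropriate.
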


Recall that \( \varphi_i(A^k) \) is equal to the number of \( v_i \)-\( v_i \) walks in \( G \) of length \( k \).

\subsection{Vertex energies of Frucht graph}
\begin{theorem}
Let $G$ be the Frucht graph with vertex set \( V(G) = \{v_1, v_2, \ldots, v_{12}\} \). 
Then, the graph energy at each vertex \( v_i \in V(G) \) is given by:\\
$\mathcal{E}_G(v_1) = 1.50636,  
\mathcal{E}_G(v_2) = 1.55632, 
\mathcal{E}_G(v_3) = 1.45627, 
\mathcal{E}_G(v_4) = 1.44865, 
\mathcal{E}_G(v_5) = 1.54705, 
\mathcal{E}_G(v_6) = 1.52488, 
\mathcal{E}_G(v_7) = 1.48642, 
\mathcal{E}_G(v_8) = 1.54800, 
\mathcal{E}_G(v_9) = 1.43233, 
\mathcal{E}_G(v_{10}) = 1.44129, 
\mathcal{E}_G(v_{11}) = 1.55632, 
\mathcal{E}_G(v_{12}) = 1.56952.$
\end{theorem}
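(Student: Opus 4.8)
The plan is to compute the vertex energies directly from Theorem~1, which expresses $\mathcal{E}_G(v_i) = \sum_{j=1}^{12} p_{ij}|\lambda_j|$ with $p_{ij} = u_{ij}^2$. First I would fix an explicit labelling of the twelve vertices of the Frucht graph and write down its $12 \times 12$ adjacency matrix $A$ (the Frucht graph is cubic, so every row has exactly three ones). Next I would determine the spectrum: the Frucht graph has no nontrivial automorphisms, so I do not expect a clean closed form, but the characteristic polynomial factors enough that the eigenvalues $\lambda_1 = 3$ (from $3$-regularity) and the remaining eleven can be obtained numerically to the precision shown (five decimal places).

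The core step is to produce an orthonormal eigenbasis, i.e.\ the orthogonal matrix $U = (u_{ij})$ whose columns are unit eigenvectors of $A$. For the simple eigenvalues this is just normalization of the kernel vector of $\lambda_j I - A$; if any eigenvalue is repeated I would apply Gram–Schmidt within that eigenspace to keep $U$ orthogonal (the doubly stochastic property $\sum_i p_{ij} = \sum_j p_{ij} = 1$ then serves as a built-in consistency check). Once $U$ is in hand, forming $p_{ij} = u_{ij}^2$ and taking the row-weighted sums $\sum_j p_{ij}|\lambda_j|$ yields the twelve claimed values; summing them should reproduce $\mathcal{E}(G) = \operatorname{trace}|A|$, giving a second sanity check. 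Equivalently, one can compute $|A| = (AA^T)^{1/2} = U\,\mathrm{diag}(|\lambda_j|)\,U^T$ and simply read off the diagonal entries $|A|_{ii}$.

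The main obstacle is not conceptual but computational: the Frucht graph's lack of symmetry means the eigenvalues and eigenvectors are genuinely ``generic'' irrational numbers with no exploitable structure, so the whole computation must be carried out numerically (or in a computer algebra system) and the stated five-digit figures depend on the particular vertex labelling chosen. I would therefore present the adjacency matrix and the resulting spectrum explicitly, note the software used, and rely on the two invariance checks ($\sum_i p_{ij} = \sum_j p_{ij} = 1$ and $\sum_i \mathcal{E}_G(v_i) = \mathcal{E}(G)$) to certify correctness, rather than attempting a hand derivation.
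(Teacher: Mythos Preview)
Your plan is correct and would produce the stated values, but it is not the route the paper takes. The paper never computes eigenvectors. Instead, it exploits the fact that the Frucht graph has twelve \emph{distinct} eigenvalues: for each vertex $v_i$ the moment identity of Theorem~2, $\varphi_i(A^k)=\sum_{j=1}^{12}p_{ij}\lambda_j^{\,k}$, is evaluated at $k=0,1,\dots,11$, where the left-hand sides are the exact integer counts $(A^k)_{ii}$ of closed walks of length $k$ at $v_i$. This yields a $12\times 12$ Vandermonde system in the unknowns $p_{i1},\dots,p_{i,12}$, which is then solved numerically and fed into Theorem~1.

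What each buys: your eigenvector approach is the more direct linear-algebra computation and works regardless of eigenvalue multiplicities (and your suggestion to read off the diagonal of $U\,\mathrm{diag}(|\lambda_j|)\,U^{T}$ is essentially the definition of $|A|_{ii}$). The paper's walk-counting approach avoids eigenvectors entirely and keeps the input data exact (the $(A^k)_{ii}$ are integers), pushing all numerical error into the eigenvalues and the Vandermonde solve; it also ties the computation back to combinatorics, but it relies on the spectrum being simple so that the Vandermonde matrix is invertible. Either method is acceptable here, and your proposed sanity checks (double stochasticity of $(p_{ij})$ and $\sum_i\mathcal{E}_G(v_i)=\mathcal{E}(G)$) are good additions that the paper does not spell out.
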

\begin{proof}
The Frucht graph $G$ is uniquely constructed to possess only the identity automorphism, meaning it has a trivial automorphism group. Consequently, its adjacency matrix typically exhibits simple eigenvalues (each with multiplicity one), resulting in a greater number of distinct eigenvalues. The eigenvalues of the Frucht graph \cite{8} are: \\
$\lambda = \{-2.33866, -2, -1.80194, -1.45106, -1, -0.44504, 0, 0.51912, 1.24698, 2, 2.2706, 3\}$. 

We now compute the energy at each vertex $v_{i}$, where $1 \leq i \leq 12$. Since there are 12 distinct eigenvalues, the weights $p_{ij}$ in Eq. (1)  can be calculated with a 12 by 12 system of linear equations coming from calculating the first moments, on one hand, directly counting walks and on the other. For convenience, we denote the weights \( p_{ij} \) as \( p_{i,j} \).

\textbf{Graph energy calculation for vertex \(v_1\):} \\
Let \( p_{1,1}, p_{1,2}, p_{1,3}, p_{1,4}, p_{1,5}, p_{1,6}, p_{1,7}, p_{1,8}, p_{1,9}, p_{1,10}, p_{1,11}, p_{1,12}\) be the weights of the vertex \( v_1 \) in \( G \). Then, by Eq. (2), 
\[
\tiny
\begin{aligned}
p_{1,1} \lambda_1^0 + p_{1,2} \lambda_2^0 + p_{1,3} \lambda_3^0 + p_{1,4} \lambda_4^0 + p_{1,5} \lambda_5^0 + p_{1,6} \lambda_6^0 + p_{1,7} \lambda_7^0 + p_{1,8} \lambda_8^0 + p_{1,9} \lambda_9^0 + p_{1,10} \lambda_{10}^0 + p_{1,11} \lambda_{11}^0 + p_{1,12} \lambda_{12}^0 =1,\\
p_{1,1} \lambda_1^1 + p_{1,2} \lambda_2^1 + p_{1,3} \lambda_3^1 + p_{1,4} \lambda_4^1 + p_{1,5} \lambda_5^1 + p_{1,6} \lambda_6^1 + p_{1,7} \lambda_7^1 + p_{1,8} \lambda_8^1 + p_{1,9} \lambda_9^1 + p_{1,10} \lambda_{10}^1 + p_{1,11} \lambda_{11}^1 + p_{1,12} \lambda_{12}^1 =0,\\ 
p_{1,1} \lambda_1^2 + p_{1,2} \lambda_2^2 + p_{1,3} \lambda_3^2 + p_{1,4} \lambda_4^2 + p_{1,5} \lambda_5^2 + p_{1,6} \lambda_6^2 + p_{1,7} \lambda_7^2 + p_{1,8} \lambda_8^2 + p_{1,9} \lambda_9^2 + p_{1,10} \lambda_{10}^2 + p_{1,11} \lambda_{11}^2 + p_{1,12} \lambda_{12}^2 =3,\\
p_{1,1} \lambda_1^3 + p_{1,2} \lambda_2^3 + p_{1,3} \lambda_3^3 + p_{1,4} \lambda_4^3 + p_{1,5} \lambda_5^3 + p_{1,6} \lambda_6^3 + p_{1,7} \lambda_7^3 + p_{1,8} \lambda_8^3 + p_{1,9} \lambda_9^3 + p_{1,10} \lambda_{10}^3 + p_{1,11} \lambda_{11}^3 + p_{1,12} \lambda_{12}^3=2, \\
p_{1,1} \lambda_1^4 + p_{1,2} \lambda_2^4 + p_{1,3} \lambda_3^4 + p_{1,4} \lambda_4^4 + p_{1,5} \lambda_5^4 + p_{1,6} \lambda_6^4 + p_{1,7} \lambda_7^4 + p_{1,8} \lambda_8^4 + p_{1,9} \lambda_9^4 + p_{1,10} \lambda_{10}^4 + p_{1,11} \lambda_{11}^4 + p_{1,12} \lambda_{12}^4 =15,\\
p_{1,1} \lambda_1^5 + p_{1,2} \lambda_2^5 + p_{1,3} \lambda_3^5 + p_{1,4} \lambda_4^5 + p_{1,5} \lambda_5^5 + p_{1,6} \lambda_6^5 + p_{1,7} \lambda_7^5 + p_{1,8} \lambda_8^5 + p_{1,9} \lambda_9^5 + p_{1,10} \lambda_{10}^5 + p_{1,11} \lambda_{11}^5 + p_{1,12} \lambda_{12}^5 = 22,\\
p_{1,1} \lambda_1^6 + p_{1,2} \lambda_2^6 + p_{1,3} \lambda_3^6 + p_{1,4} \lambda_4^6 + p_{1,5} \lambda_5^6 + p_{1,6} \lambda_6^6 + p_{1,7} \lambda_7^6 + p_{1,8} \lambda_8^6 + p_{1,9} \lambda_9^6 + p_{1,10} \lambda_{10}^6 + p_{1,11} \lambda_{11}^6 + p_{1,12} \lambda_{12}^6 =95,\\
p_{1,1} \lambda_1^7 + p_{1,2} \lambda_2^7 + p_{1,3} \lambda_3^7 + p_{1,4} \lambda_4^7 + p_{1,5} \lambda_5^7 + p_{1,6} \lambda_6^7 + p_{1,7} \lambda_7^7 + p_{1,8} \lambda_8^7 + p_{1,9} \lambda_9^7 + p_{1,10} \lambda_{10}^7 + p_{1,11} \lambda_{11}^7 + p_{1,12} \lambda_{12}^7 =200,\\
p_{1,1} \lambda_1^8 + p_{1,2} \lambda_2^8 + p_{1,3} \lambda_3^8 + p_{1,4} \lambda_4^8 + p_{1,5} \lambda_5^8 + p_{1,6} \lambda_6^8 + p_{1,7} \lambda_7^8 + p_{1,8} \lambda_8^8 + p_{1,9} \lambda_9^8 + p_{1,10} \lambda_{10}^8 + p_{1,11} \lambda_{11}^8 + p_{1,12} \lambda_{12}^8 =701,\\
p_{1,1} \lambda_1^9 + p_{1,2} \lambda_2^9 + p_{1,3} \lambda_3^9 + p_{1,4} \lambda_4^9 + p_{1,5} \lambda_5^9 + p_{1,6} \lambda_6^9 + p_{1,7} \lambda_7^9 + p_{1,8} \lambda_8^9 + p_{1,9} \lambda_9^9 + p_{1,10} \lambda_{10}^9 + p_{1,11} \lambda_{11}^9 + p_{1,12} \lambda_{12}^9 =1756, \\
p_{1,1} \lambda_1^{10} + p_{1,2} \lambda_2^{10} + p_{1,3} \lambda_3^{10} + p_{1,4} \lambda_4^{10} + p_{1,5} \lambda_5^{10} + p_{1,6} \lambda_6^{10} + p_{1,7} \lambda_7^{10} + p_{1,8} \lambda_8^{10} + p_{1,9} \lambda_9^{10} + p_{1,10} \lambda_{10}^{10} + p_{1,11} \lambda_{11}^{10} + p_{1,12} \lambda_{12}^{10} =5653,\\
p_{1,1} \lambda_1^{11} + p_{1,2} \lambda_2^{11} + p_{1,3} \lambda_3^{11} + p_{1,4} \lambda_4^{11} + p_{1,5} \lambda_5^{11} + p_{1,6} \lambda_6^{11} + p_{1,7} \lambda_7^{11} + p_{1,8} \lambda_8^{11} + p_{1,9} \lambda_9^{11} + p_{1,10} \lambda_{10}^{11} + p_{1,11} \lambda_{11}^{11} + p_{1,12} \lambda_{12}^{11}=15422.
\end{aligned}
\]
By substituting the eigenvalues and solving this system of equations, we obtain the weights assigned for the vertex \(v_1\) as follows:\\
\[
\begin{aligned}
p_{1,1} &= 0.0278,   & p_{1,2} &= 0,   & p_{1,3} &= 0.23192, \\
p_{1,4} &= 0.16277,  & p_{1,5} &= 0.02777,  & p_{1,6} &= 0.01415, \\
p_{1,7} &= 0.16666,  & p_{1,8} &= 0.01313,  & p_{1,9} &= 0.11107, \\
p_{1,10} &= 0.03175, & p_{1,11} &= 0.12964, & p_{1,12} &= 0.08333.
\end{aligned}
\]
Therefore, by Eq. (1)
\[
\begin{aligned}
\mathcal{E}_{G}(v_1) &= 0.0278|\lambda_1| + 0|\lambda_2| + 0.23192|\lambda_3| + 0.16277|\lambda_4| + 0.02777|\lambda_5| \\
&\quad + 0.01415|\lambda_6| + 0.16666|\lambda_7| + 0.01313|\lambda_8| + 0.11107|\lambda_9| \\
&\quad + 0.03175|\lambda_{10}| + 0.12964|\lambda_{11}| + 0.08333|\lambda_{12}| \\
& = 0.0278|-2.33866| + 0|-2| + 0.23192|-1.80194| + 0.16277|-1.45106| \\  
&\quad + 0.02777|-1| + 0.01415|-0.44504| +0.16666|0|+ 0.01313|0.51912| + 0.11107|1.24698| \\
&\quad + 0.03175|2| + 0.12964|2.2706| + 0.08333|3| \\
&= 0.0278(2.33866) + 0(2) + 0.23192(1.80194) + 0.16277(1.45106) + 0.02777(1) \\
&\quad + 0.01415(0.44504) +0.16666 (0)+0.01313(0.51912) + 0.11107(1.24698) \\
&\quad + 0.03175(2) + 0.12964(2.2706) + 0.08333(3) \\
& = 1.50636.
\end{aligned}
\]

\newpage
\textbf{Graph energy calculation for vertex \(v_2\):} \\
Let \( p_{2,1}, p_{2,2}, p_{2,3}, p_{2,4}, p_{2,5}, p_{2,6}, p_{2,7}, p_{2,8}, p_{2,9}, p_{2,10}, p_{2,11}, p_{2,12} \) 
be the weights of the vertex \( v_2 \) in \( G \). Then, by Eq. (2),
\[
\tiny
\begin{aligned}
p_{2,1} \lambda_1^0 + p_{2,2} \lambda_2^0 + p_{2,3} \lambda_3^0 + p_{2,4} \lambda_4^0 + p_{2,5} \lambda_5^0 + p_{2,6} \lambda_6^0 + p_{2,7} \lambda_7^0 + p_{2,8} \lambda_8^0 + p_{2,9} \lambda_9^0 + p_{2,10} \lambda_{10}^0 + p_{2,11} \lambda_{11}^0 + p_{2,12} \lambda_{12}^0 &= 1, \\
p_{2,1} \lambda_1^1 + p_{2,2} \lambda_2^1 + p_{2,3} \lambda_3^1 + p_{2,4} \lambda_4^1 + p_{2,5} \lambda_5^1 + p_{2,6} \lambda_6^1 + p_{2,7} \lambda_7^1 + p_{2,8} \lambda_8^1 + p_{2,9} \lambda_9^1 + p_{2,10} \lambda_{10}^1 + p_{2,11} \lambda_{11}^1 + p_{2,12} \lambda_{12}^1 &= 0, \\
p_{2,1} \lambda_1^2 + p_{2,2} \lambda_2^2 + p_{2,3} \lambda_3^2 + p_{2,4} \lambda_4^2 + p_{2,5} \lambda_5^2 + p_{2,6} \lambda_6^2 + p_{2,7} \lambda_7^2 + p_{2,8} \lambda_8^2 + p_{2,9} \lambda_9^2 + p_{2,10} \lambda_{10}^2 + p_{2,11} \lambda_{11}^2 + p_{2,12} \lambda_{12}^2 &= 3, \\
p_{2,1} \lambda_1^3 + p_{2,2} \lambda_2^3 + p_{2,3} \lambda_3^3 + p_{2,4} \lambda_4^3 + p_{2,5} \lambda_5^3 + p_{2,6} \lambda_6^3 + p_{2,7} \lambda_7^3 + p_{2,8} \lambda_8^3 + p_{2,9} \lambda_9^3 + p_{2,10} \lambda_{10}^3 + p_{2,11} \lambda_{11}^3 + p_{2,12} \lambda_{12}^3 &= 2, \\
p_{2,1} \lambda_1^4 + p_{2,2} \lambda_2^4 + p_{2,3} \lambda_3^4 + p_{2,4} \lambda_4^4 + p_{2,5} \lambda_5^4 + p_{2,6} \lambda_6^4 + p_{2,7} \lambda_7^4 + p_{2,8} \lambda_8^4 + p_{2,9} \lambda_9^4 + p_{2,10} \lambda_{10}^4 + p_{2,11} \lambda_{11}^4 + p_{2,12} \lambda_{12}^4 &= 15, \\
p_{2,1} \lambda_1^5 + p_{2,2} \lambda_2^5 + p_{2,3} \lambda_3^5 + p_{2,4} \lambda_4^5 + p_{2,5} \lambda_5^5 + p_{2,6} \lambda_6^5 + p_{2,7} \lambda_7^5 + p_{2,8} \lambda_8^5 + p_{2,9} \lambda_9^5 + p_{2,10} \lambda_{10}^5 + p_{2,11} \lambda_{11}^5 + p_{2,12} \lambda_{12}^5 &= 20, \\
p_{2,1} \lambda_1^6 + p_{2,2} \lambda_2^6 + p_{2,3} \lambda_3^6 + p_{2,4} \lambda_4^6 + p_{2,5} \lambda_5^6 + p_{2,6} \lambda_6^6 + p_{2,7} \lambda_7^6 + p_{2,8} \lambda_8^6 + p_{2,9} \lambda_9^6 + p_{2,10} \lambda_{10}^6 + p_{2,11} \lambda_{11}^6 + p_{2,12} \lambda_{12}^6 &= 95, \\
p_{2,1} \lambda_1^7 + p_{2,2} \lambda_2^7 + p_{2,3} \lambda_3^7 + p_{2,4} \lambda_4^7 + p_{2,5} \lambda_5^7 + p_{2,6} \lambda_6^7 + p_{2,7} \lambda_7^7 + p_{2,8} \lambda_8^7 + p_{2,9} \lambda_9^7 + p_{2,10} \lambda_{10}^7 + p_{2,11} \lambda_{11}^7 + p_{2,12} \lambda_{12}^7 &= 182, \\
p_{2,1} \lambda_1^8 + p_{2,2} \lambda_2^8 + p_{2,3} \lambda_3^8 + p_{2,4} \lambda_4^8 + p_{2,5} \lambda_5^8 + p_{2,6} \lambda_6^8 + p_{2,7} \lambda_7^8 + p_{2,8} \lambda_8^8 + p_{2,9} \lambda_9^8 + p_{2,10} \lambda_{10}^8 + p_{2,11} \lambda_{11}^8 + p_{2,12} \lambda_{12}^8 &= 697, \\
p_{2,1} \lambda_1^9 + p_{2,2} \lambda_2^9 + p_{2,3} \lambda_3^9 + p_{2,4} \lambda_4^9 + p_{2,5} \lambda_5^9 + p_{2,6} \lambda_6^9 + p_{2,7} \lambda_7^9 + p_{2,8} \lambda_8^9 + p_{2,9} \lambda_9^9 + p_{2,10} \lambda_{10}^9 + p_{2,11} \lambda_{11}^9 + p_{2,12} \lambda_{12}^9 &= 1638, \\
p_{2,1} \lambda_1^{10} + p_{2,2} \lambda_2^{10} + p_{2,3} \lambda_3^{10} + p_{2,4} \lambda_4^{10} + p_{2,5} \lambda_5^{10} + p_{2,6} \lambda_6^{10} + p_{2,7} \lambda_7^{10} + p_{2,8} \lambda_8^{10} + p_{2,9} \lambda_9^{10} + p_{2,10} \lambda_{10}^{10} + p_{2,11} \lambda_{11}^{10} + p_{2,12} \lambda_{12}^{10} &= 5603, \\
p_{2,1} \lambda_1^{11} + p_{2,2} \lambda_2^{11} + p_{2,3} \lambda_3^{11} + p_{2,4} \lambda_4^{11} + p_{2,5} \lambda_5^{11} + p_{2,6} \lambda_6^{11} + p_{2,7} \lambda_7^{11} + p_{2,8} \lambda_8^{11} + p_{2,9} \lambda_9^{11} + p_{2,10} \lambda_{10}^{11} + p_{2,11} \lambda_{11}^{11} + p_{2,12} \lambda_{12}^{11} &= 14732.
\end{aligned}
\]

By substituting the eigenvalues and solving this system of equations, we obtain the weights assigned for the vertex \(v_2\) as follows:\\
\[
\begin{aligned}
p_{2,1} &= 0.03896, & p_{2,2} &= 0.125, & p_{2,3} &= 0.07143, \\
p_{2,4} &= 0.01829, & p_{2,5} &= 0.25, & p_{2,6} &= 0.07143, \\
p_{2,7} &= 0.04166, & p_{2,8} &= 0.02065, & p_{2,9} &= 0.07143, \\
p_{2,10} &= 0.16072, & p_{2,11} &= 0.0471, & p_{2,12} &= 0.08333.
\end{aligned}
\]
Therefore, by Eq. (1)
\[
\begin{aligned}
\mathcal{E}_{G}(v_2) &= 0.03896|\lambda_1| + 0.125|\lambda_2| + 0.07143|\lambda_3| + 0.01829|\lambda_4| \\
&\quad + 0.25|\lambda_5| + 0.07143|\lambda_6| + 0.04166|\lambda_7| + 0.02065|\lambda_8| \\
&\quad + 0.07143|\lambda_9| + 0.16072|\lambda_{10}| + 0.04710|\lambda_{11}| + 0.08333|\lambda_{12}| \\
&= 0.03896(2.33866) + 0.125(2) + 0.07143(1.80194) + 0.01829(1.45106) \\
&\quad + 0.25(1) + 0.07143(0.44504) + 0.04166(0) + 0.02065(0.51912) \\
&\quad + 0.07143(1.24698) + 0.16072(2) + 0.04710(2.2706) + 0.08333(3) \\
&= 0.09109 + 0.25000 + 0.12871 + 0.02654 \\
&\quad + 0.25000 + 0.03179 + 0 + 0.01072 \\
&\quad + 0.08909 + 0.32144 + 0.10691 + 0.24999 \\
&= 1.55632
\end{aligned}
\]

\newpage
\textbf{Graph energy calculation for vertex \( v_3 \):} \\
Let \( p_{3,1}, p_{3,2}, p_{3,3}, p_{3,4}, p_{3,5}, p_{3,6}, p_{3,7}, p_{3,8}, p_{3,9}, p_{3,10}, p_{3,11}, p_{3,12} \) be the weights of the vertex \( v_3 \) in \( G \). Then, by Eq. (2),
\[
\tiny
\begin{aligned}
p_{3,1} \lambda_1^0 + p_{3,2} \lambda_2^0 + p_{3,3} \lambda_3^0 + p_{3,4} \lambda_4^0 + p_{3,5} \lambda_5^0 + p_{3,6} \lambda_6^0 + p_{3,7} \lambda_7^0 + p_{3,8} \lambda_8^0 + p_{3,9} \lambda_9^0 + p_{3,10} \lambda_{10}^0 + p_{3,11} \lambda_{11}^0 + p_{3,12} \lambda_{12}^0 &= 1, \\
p_{3,1} \lambda_1^1 + p_{3,2} \lambda_2^1 + p_{3,3} \lambda_3^1 + p_{3,4} \lambda_4^1 + p_{3,5} \lambda_5^1 + p_{3,6} \lambda_6^1 + p_{3,7} \lambda_7^1 + p_{3,8} \lambda_8^1 + p_{3,9} \lambda_9^1 + p_{3,10} \lambda_{10}^1 + p_{3,11} \lambda_{11}^1 + p_{3,12} \lambda_{12}^1 &= 0, \\
p_{3,1} \lambda_1^2 + p_{3,2} \lambda_2^2 + p_{3,3} \lambda_3^2 + p_{3,4} \lambda_4^2 + p_{3,5} \lambda_5^2 + p_{3,6} \lambda_6^2 + p_{3,7} \lambda_7^2 + p_{3,8} \lambda_8^2 + p_{3,9} \lambda_9^2 + p_{3,10} \lambda_{10}^2 + p_{3,11} \lambda_{11}^2 + p_{3,12} \lambda_{12}^2 &= 3, \\
p_{3,1} \lambda_1^3 + p_{3,2} \lambda_2^3 + p_{3,3} \lambda_3^3 + p_{3,4} \lambda_4^3 + p_{3,5} \lambda_5^3 + p_{3,6} \lambda_6^3 + p_{3,7} \lambda_7^3 + p_{3,8} \lambda_8^3 + p_{3,9} \lambda_9^3 + p_{3,10} \lambda_{10}^3 + p_{3,11} \lambda_{11}^3 + p_{3,12} \lambda_{12}^3 &= 0, \\
p_{3,1} \lambda_1^4 + p_{3,2} \lambda_2^4 + p_{3,3} \lambda_3^4 + p_{3,4} \lambda_4^4 + p_{3,5} \lambda_5^4 + p_{3,6} \lambda_6^4 + p_{3,7} \lambda_7^4 + p_{3,8} \lambda_8^4 + p_{3,9} \lambda_9^4 + p_{3,10} \lambda_{10}^4 + p_{3,11} \lambda_{11}^4 + p_{3,12} \lambda_{12}^4 &= 17, \\
p_{3,1} \lambda_1^5 + p_{3,2} \lambda_2^5 + p_{3,3} \lambda_3^5 + p_{3,4} \lambda_4^5 + p_{3,5} \lambda_5^5 + p_{3,6} \lambda_6^5 + p_{3,7} \lambda_7^5 + p_{3,8} \lambda_8^5 + p_{3,9} \lambda_9^5 + p_{3,10} \lambda_{10}^5 + p_{3,11} \lambda_{11}^5 + p_{3,12} \lambda_{12}^5 &= 8, \\
p_{3,1} \lambda_1^6 + p_{3,2} \lambda_2^6 + p_{3,3} \lambda_3^6 + p_{3,4} \lambda_4^6 + p_{3,5} \lambda_5^6 + p_{3,6} \lambda_6^6 + p_{3,7} \lambda_7^6 + p_{3,8} \lambda_8^6 + p_{3,9} \lambda_9^6 + p_{3,10} \lambda_{10}^6 + p_{3,11} \lambda_{11}^6 + p_{3,12} \lambda_{12}^6 &= 111, \\
p_{3,1} \lambda_1^7 + p_{3,2} \lambda_2^7 + p_{3,3} \lambda_3^7 + p_{3,4} \lambda_4^7 + p_{3,5} \lambda_5^7 + p_{3,6} \lambda_6^7 + p_{3,7} \lambda_7^7 + p_{3,8} \lambda_8^7 + p_{3,9} \lambda_9^7 + p_{3,10} \lambda_{10}^7 + p_{3,11} \lambda_{11}^7 + p_{3,12} \lambda_{12}^7 &= 116, \\
p_{3,1} \lambda_1^8 + p_{3,2} \lambda_2^8 + p_{3,3} \lambda_3^8 + p_{3,4} \lambda_4^8 + p_{3,5} \lambda_5^8 + p_{3,6} \lambda_6^8 + p_{3,7} \lambda_7^8 + p_{3,8} \lambda_8^8 + p_{3,9} \lambda_9^8 + p_{3,10} \lambda_{10}^8 + p_{3,11} \lambda_{11}^8 + p_{3,12} \lambda_{12}^8 &=  799, \\
p_{3,1} \lambda_1^9 + p_{3,2} \lambda_2^9 + p_{3,3} \lambda_3^9 + p_{3,4} \lambda_4^9 + p_{3,5} \lambda_5^9 + p_{3,6} \lambda_6^9 + p_{3,7} \lambda_7^9 + p_{3,8} \lambda_8^9 + p_{3,9} \lambda_9^9 + p_{3,10} \lambda_{10}^9 + p_{3,11} \lambda_{11}^9 + p_{3,12} \lambda_{12}^9 &= 1280, \\
p_{3,1} \lambda_1^{10} + p_{3,2} \lambda_2^{10} + p_{3,3} \lambda_3^{10} + p_{3,4} \lambda_4^{10} + p_{3,5} \lambda_5^{10} + p_{3,6} \lambda_6^{10} + p_{3,7} \lambda_7^{10} + p_{3,8} \lambda_8^{10} + p_{3,9} \lambda_9^{10} + p_{3,10} \lambda_{10}^{10} + p_{3,11} \lambda_{11}^{10} + p_{3,12} \lambda_{12}^{10} &= 6209, \\
p_{3,1} \lambda_1^{11} + p_{3,2} \lambda_2^{11} + p_{3,3} \lambda_3^{11} + p_{3,4} \lambda_4^{11} + p_{3,5} \lambda_5^{11} + p_{3,6} \lambda_6^{11} + p_{3,7} \lambda_7^{11} + p_{3,8} \lambda_8^{11} + p_{3,9} \lambda_9^{11} + p_{3,10} \lambda_{10}^{11} + p_{3,11} \lambda_{11}^{11} + p_{3,12} \lambda_{12}^{11} &= 12796.
\end{aligned}
\]

Substituting the eigenvalues, we obtain the weights:
\[
\begin{aligned}
p_{3,1} &= 0.19139, & p_{3,2} &= 0.12499, & p_{3,3} &= 0.00001, \\
p_{3,4} &= 0.00190, & p_{3,5} &= 0.02778, & p_{3,6} &= 0, \\
p_{3,7} &= 0.04167, & p_{3,8} &= 0.39769, & p_{3,9} &= 0, \\
p_{3,10} &= 0.09722, & p_{3,11} &= 0.03402, & p_{3,12} &= 0.08333.
\end{aligned}
\]
Therefore, by Eq. (1)
\[
\begin{aligned}
\mathcal{E}_{G}(v_3) &= 0.19139|\lambda_1| + 0.12499|\lambda_2| + 0.00001|\lambda_3| + 0.00190|\lambda_4| \\
&\quad + 0.02778|\lambda_5| + 0|\lambda_6| + 0.04167|\lambda_7| + 0.39769|\lambda_8| \\
&\quad + 0|\lambda_9| + 0.09722|\lambda_{10}| + 0.03402|\lambda_{11}| + 0.08333|\lambda_{12}| \\
& = 0.19139 \times 2.33866 + 0.12499 \times 2.0 + 0.00001 \times 1.80194 \\
&\quad + 0.00190 \times 1.45106 + 0.02778 \times 1.0 + 0 \times 0.44504 \\
&\quad + 0.04167 \times 0.0 + 0.39769 \times 0.51912 + 0 \times 1.24698 \\
&\quad + 0.09722 \times 2.0 + 0.03402 \times 2.2706 + 0.08333 \times 3 \\
&= 0.4476 + 0.24998 + 0.000018 + 0.002757 + 0.02778 + 0.20646 \\
&\quad + 0.19444 + 0.07721 + 0.24999 \\
&= 1.45627
\end{aligned}
\]

Detailed computations for the vertex energies of the first three vertices, $v_1$, $v_2$, and $v_3$, have been presented. The remaining cases follow an analogous computational approach, involving systematic but extensive calculations. Vertex energies for the remaining vertices $v_i$ ($4 \leq i \leq 12$) can be computed using the same methodology.

\newpage

For completeness, we provide the corresponding values of \( \varphi_i(A^k) \), as defined in Eq. (2), which are essential for computing the vertex energies of the Frucht graph. These values are summarized in Table 1. 

\begin{center}
\begin{adjustbox}{width=\textwidth}
\begin{tabular}{|c|*{16}{>{\centering\arraybackslash}p{3.2em}|}}
\hline
$k$ & $v_1$ & $v_2$ & $v_3$ & $v_4$ & $v_5$ & $v_6$ & $v_7$ & $v_8$ & $v_9$ & $v_{10}$ & $v_{11}$ & $v_{12}$ \\
\hline
0  & 1.0  & 1.0  & 1.0  & 1.0  & 1.0  & 1.0  & 1.0  & 1.0  & 1.0  & 1.0  & 1.0  & 1.0  \\
1  & 0.0  & 0.0  & 0.0  & 0.0  & 0.0  & 0.0  & 0.0  & 0.0  & 0.0  & 0.0  & 0.0  & 0.0  \\
2  & 3.0  & 3.0  & 3.0  & 3.0  & 3.0  & 3.0  & 3.0  & 3.0  & 3.0  & 3.0  & 3.0  & 3.0  \\
3  & 2.0  & 2.0  & 0.0  & 2.0  & 2.0  & 2.0  & 2.0  & 2.0  & 0.0  & 2.0  & 2.0  & 0.0  \\
4  & 15.0 & 15.0 & 17.0 & 17.0 & 15.0 & 15.0 & 15.0 & 15.0 & 17.0 & 17.0 & 15.0 & 15.0 \\
5  & 22.0 & 20.0 & 8.0  & 20.0 & 22.0 & 20.0 & 22.0 & 22.0 & 6.0  & 20.0 & 20.0 & 8.0  \\
6  & 95.0 & 95.0 & 111.0& 111.0& 95.0 & 93.0 & 93.0 & 95.0 & 113.0& 113.0& 95.0 & 97.0 \\
7  & 200.0& 182.0& 116.0& 184.0& 200.0& 182.0& 200.0& 198.0& 100.0& 180.0& 182.0& 120.0\\
8  & 701.0& 697.0& 799.0& 799.0& 699.0& 677.0& 679.0& 699.0& 823.0& 821.0& 697.0& 721.0\\
9  & 1756.0&1638.0&1280.0&1658.0&1758.0&1636.0&1756.0&1736.0&1178.0&1614.0&1638.0&1324.0\\
10 & 5653.0&5603.0&6209.0&6205.0&5627.0&5461.0&5487.0&5627.0&6401.0&6373.0&5603.0&5797.0\\
11 & 15422.0&14732.0&12796.0&14870.0&15452.0&14708.0&15422.0&15286.0&12188.0&14532.0&14732.0&13132.0\\
\hline
\end{tabular}
\end{adjustbox}
\end{center}

The following are the weights corresponding to each vertex:
\[
\text{Weights of vertex } v_4:
\begin{aligned}
p_{4,1}  &= 0.0987,  & p_{4,2}  &= 0.125,   & p_{4,3}  &= 0.022,   \\
p_{4,4}  &= 0.01074, & p_{4,5}  &= 0.02777, & p_{4,6}  &= 0.36065, \\
p_{4,7}  &= 0.04166, & p_{4,8}  &= 0.0004,  & p_{4,9}  &= 0.04593, \\
p_{4,10} &= 0.00199, & p_{4,11} &= 0.18183, & p_{4,12} &= 0.08333.
\end{aligned}
\]

\[
\text{Weights of vertex } v_5:
\begin{aligned}
p_{5,1}  &= 0.00951, & p_{5,2}  &= 0.125,   & p_{5,3}  &= 0.07142, \\
p_{5,4}  &= 0.21572, & p_{5,5}  &= 0.02777, & p_{5,6}  &= 0.07143, \\
p_{5,7}  &= 0.04166, & p_{5,8}  &= 0.1108,  & p_{5,9}  &= 0.07143, \\
p_{5,10} &= 0.04961, & p_{5,11} &= 0.12231, & p_{5,12} &= 0.08333.
\end{aligned}
\]

\[
\text{Weights of vertex } v_6:
\begin{aligned}
p_{6,1}  &= 0.02174, & p_{6,2}  &= 0.125,   & p_{6,3}  &= 0.11107, \\
p_{6,4}  &= 0.08991, & p_{6,5}  &= 0.02778, & p_{6,6}  &= 0.23193, \\
p_{6,7}  &= 0.04167, & p_{6,8}  &= 0.00895, & p_{6,9}  &= 0.01415, \\
p_{6,10} &= 0.24008, & p_{6,11} &= 0.0044,  & p_{6,12} &= 0.08333.
\end{aligned}
\]

\[
\text{Weights of vertex } v_7:
\begin{aligned}
p_{7,1}  &= 0.00249,  & p_{7,2}  &= 0.0,     & p_{7,3}  &= 0.36063, \\
p_{7,4}  &= 0.0271,   & p_{7,5}  &= 0.02777, & p_{7,6}  &= 0.04594, \\
p_{7,7}  &= 0.16666,  & p_{7,8}  &= 0.05678, & p_{7,9}  &= 0.022,   \\
p_{7,10} &= 0.12699, & p_{7,11} &= 0.0803,  & p_{7,12} &= 0.08333.
\end{aligned}
\]

\[
\text{Weights of vertex } v_8:
\begin{aligned}
p_{8,1}  &= 0.02033,  & p_{8,2}  &= 0.125,    & p_{8,3}  &= -0.00001, \\
p_{8,4}  &= 0.30921,  & p_{8,5}  &= 0.02778,  & p_{8,6}  &= 0.0,      \\
p_{8,7}  &= 0.04166,  & p_{8,8}  &= 0.19488,  & p_{8,9}  &= 0.0,      \\
p_{8,10} &= 0.09722,  & p_{8,11} &= 0.10058,  & p_{8,12} &= 0.08333.
\end{aligned}
\]

\[
\text{Weights of vertex } v_9:
\begin{aligned}
p_{9,1}  &= 0.26169, & p_{9,2}  &= -0.00001, & p_{9,3}  &= 0.01416, \\
p_{9,4}  &= 0.001,   & p_{9,5}  &= 0.02778,  & p_{9,6}  &= 0.11107, \\
p_{9,7}  &= 0.16667, & p_{9,8}  &= 0.02679,  & p_{9,9}  &= 0.23193, \\
p_{9,10} &= 0.03175, & p_{9,11} &= 0.04385,  & p_{9,12} &= 0.08333.
\end{aligned}
\]

\[
\text{Weights of vertex } v_{10}:
\begin{aligned}
p_{10,1}  &= 0.15585, & p_{10,2}  &= 0.0,     & p_{10,3}  &= 0.0,      \\
p_{10,4}  &= 0.07317, & p_{10,5}  &= 0.24999, & p_{10,6}  &= 0.00001, \\
p_{10,7}  &= 0.16666, & p_{10,8}  &= 0.08259, & p_{10,9}  &= 0.0,      \\
p_{10,10} &= 0.0,     & p_{10,11} &= 0.18839, & p_{10,12} &= 0.08333.
\end{aligned}
\]

\[
\text{Weights of vertex } v_{11}:
\begin{aligned}
p_{11,1}  &= 0.03896, & p_{11,2}  &= 0.125,   & p_{11,3}  &= 0.07143, \\
p_{11,4}  &= 0.01829, & p_{11,5}  &= 0.25,    & p_{11,6}  &= 0.07143, \\
p_{11,7}  &= 0.04166, & p_{11,8}  &= 0.02065, & p_{11,9}  &= 0.07143, \\
p_{11,10} &= 0.16072, & p_{11,11} &= 0.0471,  & p_{11,12} &= 0.08333.
\end{aligned}
\]

\[
\text{Weights of vertex } v_{12}:
\begin{aligned}
p_{12,1}  &= 0.13258, & p_{12,2}  &= 0.125,   & p_{12,3}  &= 0.04594, \\
p_{12,4}  &= 0.07193, & p_{12,5}  &= 0.02778, & p_{12,6}  &= 0.022,   \\
p_{12,7}  &= 0.04167, & p_{12,8}  &= 0.06671, & p_{12,9}  &= 0.36064, \\
p_{12,10} &= 0.00198, & p_{12,11} &= 0.02046, & p_{12,12} &= 0.08333.
\end{aligned}
\]
Finally, the energy for each vertex \( v_{i}, 4 \leq i \leq 12 \), is given by 
\begin{center} 
$\mathcal{E}_G(v_4) = 1.44865$, 
$\mathcal{E}_G(v_5) = 1.54705$, 
$\mathcal{E}_G(v_6) = 1.52488$, 
$\mathcal{E}_G(v_7) = 1.48642$, 
$\mathcal{E}_G(v_8) = 1.54800$, 
$\mathcal{E}_G(v_9) = 1.43233$, 
$\mathcal{E}_G(v_{10}) = 1.44129$, 
$\mathcal{E}_G(v_{11}) = 1.55632$, 
$\mathcal{E}_G(v_{12}) = 1.56952$.
\end{center}
\end{proof}

\newpage

The distribution of vertex energies of the Frucht graph is shown in Figure 1.

 \begin{figure}[hbt!]
 	\centering
 	\includegraphics[width=160mm]{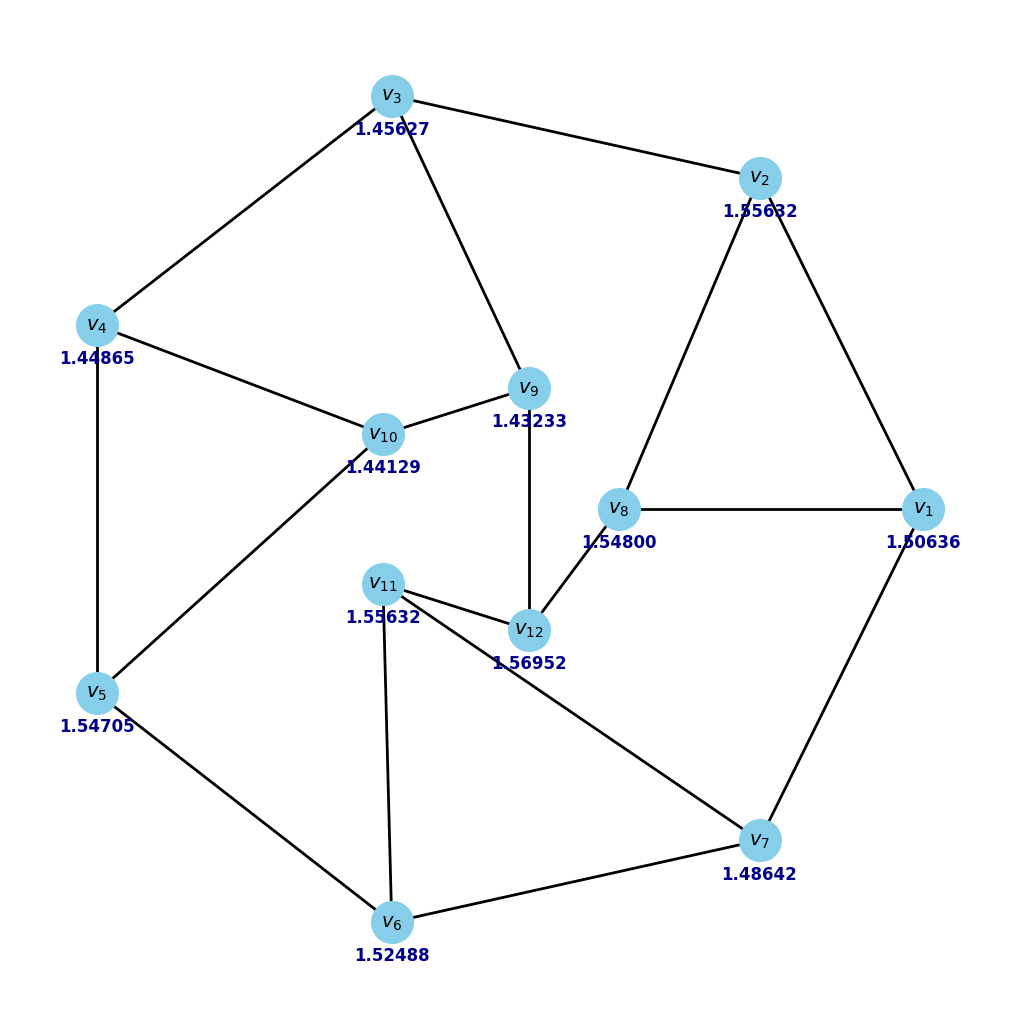}	
 	\caption{Vertex energies of Frucht graph.}		
 \end{figure}

Recall that a simple graph $G$ is said to be \emph{vertex‐transitive} if, for any two vertices $v$ and $w$, there exists an automorphism of $G$ sending $v$ to $w$.  Examples include the Desargues graph, the Tutte–Coxeter graph, the Heawood graph, the Shrikhande graph, and the Petersen graph.  

By Theorem 1, the energy of a vertex $v_i$ is
\[
  \mathcal{E}_G(v_i)
  \;=\;
  \sum_{j=1}^{n} p_{ij}\,\bigl|\lambda_j\bigr|,
\]
where the weights $p_{ij}=u_{ij}^2$ satisfy
\[
  \sum_{i=1}^n p_{ij} \;=\; 1,
  \quad
  \sum_{j=1}^n p_{ij} \;=\; 1.
\]
In a vertex‐transitive graph each $p_{ij}$ is independent of $i$, and by 
$\sum_{i=1}^n p_{ij}=1$ one obtains 
\[
  p_{ij}=\frac1n
  \quad\text{for all }i,j.
\]
Hence for every $i$,
\[
  \mathcal{E}_G(v_i)
  =
  \sum_{j=1}^n \frac1n\,\bigl|\lambda_j\bigr|
  =
  \frac{1}{n}\sum_{j=1}^n\bigl|\lambda_j\bigr|
  =
  \frac{E(G)}{n},
\]
where $E(G)=\sum_{j=1}^n|\lambda_j|$ is the total graph energy.  Thus one may compute $E(G)$ via Equations (1) and (2) and then divide by $n$ to recover the common per‐vertex energy.

We denote by \( p_{ij} \) the weights assigned to the vertices \( v_i \) for \( i \geq n \).

\subsection{Vertex energies of Desargues graph}
\begin{theorem}
Let $G$ be the Desargues graph with vertex set \( V(G) = \{v_1, v_2, \ldots, v_{20}\} \). Then $\mathcal{E}_{G}(v_{i}) = 1.6, \, 1 \leq i \leq 20$.    
\end{theorem}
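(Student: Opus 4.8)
The plan is to exploit the vertex‐transitivity of the Desargues graph, exactly as in the discussion immediately preceding the statement, and thereby reduce the problem to computing the total energy $E(G)$ and dividing by $n=20$.

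First I would record that the Desargues graph is vertex‐transitive — indeed it is distance‐transitive, being the generalized Petersen graph $GP(10,3)$, equivalently the bipartite double cover of the Petersen graph. The cleanest rigorous justification that every diagonal entry of $|A|=(AA^{T})^{1/2}$ is the same runs through conjugation rather than through the (non‑unique) eigenvectors: for any automorphism $\sigma$ with permutation matrix $P$ one has $PAP^{-1}=A$, hence $P(AA^{T})P^{-1}=AA^{T}$, and therefore $P|A|P^{-1}=|A|$ by the functional calculus for the positive square root; taking the $(i,i)$ entry gives $|A|_{\sigma(i)\sigma(i)}=|A|_{ii}$. Transitivity of the automorphism group on $V(G)$ then forces all $20$ diagonal entries to coincide, so each equals $\tfrac1n\operatorname{trace}(|A|)=\tfrac1n E(G)$. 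This is the precise form of the assertion "$p_{ij}=1/n$" used in the preamble, and it sidesteps any concern about repeated eigenvalues making the matrix $U$ non‑unique.

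Second, I would determine the spectrum of $G$. Since $G$ is the bipartite double cover of the Petersen graph, whose eigenvalues are $3$ (simple), $1$ (multiplicity $5$), and $-2$ (multiplicity $4$), the eigenvalues of $G$ are obtained by adjoining the negatives: $\pm 3$ (each simple), $\pm 2$ (each with multiplicity $4$), and $\pm 1$ (each with multiplicity $5$), accounting for $2+8+10=20$ eigenvalues. (Alternatively this spectrum may simply be quoted from a standard table, as was done for the Frucht graph.) Summing absolute values,
\[
E(G)=2\cdot 3+8\cdot 2+10\cdot 1=6+16+10=32 .
\]

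Finally, combining the two steps yields $\mathcal{E}_G(v_i)=E(G)/n=32/20=1.6$ for every $i\in\{1,\dots,20\}$. If one instead wishes to mirror the computational template used for the Frucht graph, one can set up the moment system of Equations (1)–(2) for a single vertex $v_1$ — with walk counts $\varphi_1(A^k)=1,0,3,0,\dots$ — solve it to find that all weights equal $1/20$, and read off the same value; but vertex‐transitivity makes this extra work unnecessary. The only step demanding genuine care is the spectrum computation, i.e. the identification of $G$ with the Petersen double cover (or a correct citation for its eigenvalues); everything else is forced.
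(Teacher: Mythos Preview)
Your argument is correct. Vertex-transitivity forces all diagonal entries of $|A|$ to coincide, and the spectrum you obtain via the bipartite double cover of the Petersen graph matches the paper's characteristic polynomial $(\lambda+3)(\lambda+2)^4(\lambda+1)^5(\lambda-1)^5(\lambda-2)^4(\lambda-3)$, giving $E(G)=32$ and hence $\mathcal{E}_G(v_i)=32/20=1.6$.

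The paper takes a more computational route: it writes down the moment system from Theorem~2 for the single vertex $v_1$, using the six distinct eigenvalues and the closed-walk counts $1,0,3,0,15,0$, solves to obtain the grouped weights $0.05,0.2,0.25,0.25,0.2,0.05$ (i.e.\ multiplicity$/20$), and then evaluates Equation~(1). Only after this does it invoke vertex-transitivity to extend the result to all $v_i$. Your approach front-loads the transitivity and bypasses the linear system entirely, which is cleaner; your conjugation argument $P|A|P^{-1}=|A|$ is also more rigorous than the paper's informal claim that ``$p_{ij}=1/n$ for all $i,j$'', since that claim tacitly assumes a particular choice of orthonormal eigenbasis when eigenvalues repeat. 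One small slip: in your closing remark you say the moment system would yield ``all weights equal $1/20$'', but in the paper's formulation the unknowns are the \emph{aggregated} weights per distinct eigenvalue, so they come out as $m_j/20$ rather than $1/20$. This does not affect your main argument.
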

\begin{proof}
The characteristic polynomial of the Desargues graph \( G \) \cite{9} is given by 
\[
\phi(G; \lambda) = (\lambda +3)(\lambda +2)^4(\lambda+1)^5(\lambda -1)^5(\lambda -2)^4(\lambda -3).
\]    

Let \( p_{11}, p_{12}, p_{13}, p_{14}, p_{15}, p_{16} \) denote the weights of the vertex \( v_1 \) in the graph \( G \). Then, according to Theorem 2, the following system of linear equations is obtained:
\begin{align*}
p_{11} + p_{12} + p_{13} + p_{14} + p_{15}+ p_{16} &= 1, \\
(-3)p_{11} + (-2)p_{12} +(-1) p_{13} + p_{14} + (2)p_{15}+ (3)p_{16} &= 0, \\
(-3)^2p_{11} +(-2)^2 p_{12} +(-1)^2 p_{13} + p_{14} +(2)^2 p_{15}+(3)^2 p_{16} &= 3, \\
(-3)^3p_{11} +(-2)^3 p_{12} + (-1)^3p_{13} + p_{14} +(2)^3 p_{15}+(3)^3 p_{16} &= 0, \\
(-3)^4p_{11} +(-2)^4 p_{12} + (-1)^4p_{13} + p_{14} +(2)^4 p_{15}+(3)^4 p_{16} &= 15, \\
(-3)^5p_{11} +(-2)^5 p_{12} + (-1)^5p_{13} + p_{14} +(2)^5 p_{15}+(3)^5 p_{16} &= 0.
\end{align*}
On solving these, $p_{11} = 0.05, p_{12} = 0.2, p_{13} = 0.25, p_{14}= 0.25, p_{15}=0.2, p_{16}=0.05.$

Therefore, by Theorem 1, the vertex energy of \( v_1 \) in \( G \) is given by
\begin{align*}
\mathcal{E}_{G}(v_1) &= p_{11}|\lambda_1| + p_{12}|\lambda_2| + p_{13}|\lambda_3|+ p_{14}|\lambda_4|+p_{15}|\lambda_5|+p_{16}|\lambda_6|\\
&= 0.05|-3|+0.2|-2|+0.25|-1|+0.25|1|+0.2|2|+0.05|3| \\
& = 1.6.
\end{align*}

This implies that \( \mathcal{\mathcal{E}}_{(G)}(v_i) = 1.6 \) holds for all \( i = 1, 2, \ldots, 20 \).
\end{proof}

\subsection{Vertex energies of Tutte-Coxeter graph}
\begin{theorem}
Let $G$ be the Tutte-Coxeter graph with vertex set \( V(G) = \{v_1, v_2, \ldots, v_{30}\} \). Then $\mathcal{E}_{G}(v_{i}) \approx 1.4, \, 1 \leq i \leq 30$.    
\end{theorem}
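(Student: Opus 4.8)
The plan is to reuse verbatim the argument for vertex‑transitive graphs recorded just before the Desargues subsection, since the Tutte–Coxeter graph $G$ is vertex‑transitive. First I would collect the standard structural facts: $G$ is the unique $(3,8)$‑cage, hence a connected $3$‑regular bipartite graph on $n=30$ vertices with $45$ edges and girth $8$; it is in fact arc‑transitive, so certainly vertex‑transitive. Vertex‑transitivity forces all diagonal entries of $|A|=(AA^{T})^{1/2}$ to coincide (an automorphism sending $v_i$ to $v_k$ conjugates $|A|$ to itself and permutes its rows/columns accordingly), so $\mathcal{E}_G(v_i)=|A|_{ii}=\tfrac1n\operatorname{tr}|A|=\tfrac1n\mathcal{E}(G)$ for every $i$; equivalently, in Theorem 1 every weight equals $p_{ij}=1/30$. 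Thus the whole problem reduces to computing $\mathcal{E}(G)$.

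Next I would determine the spectrum of $G$. Being the incidence (Levi) graph of the generalized quadrangle $GQ(2,2)$ — equivalently, the distance‑regular graph with intersection array $\{3,2,2,2;1,1,1,3\}$ — $G$ has characteristic polynomial $\phi(G;\lambda)=(\lambda-3)(\lambda-2)^{9}\lambda^{10}(\lambda+2)^{9}(\lambda+3)$, i.e.\ eigenvalues $3,2,0,-2,-3$ with multiplicities $1,9,10,9,1$. As sanity checks one verifies $1+9+10+9+1=30$ and $\sum_j\lambda_j^{2}=9+36+0+36+9=90=2\cdot 45$. If one prefers to stay strictly within the framework of Theorems 1–2 (as in the Desargues proof), one instead sets up the $5\times5$ linear system coming from the closed‑walk counts $\varphi_i(A^k)$ for $k=0,\dots,4$: girth $8$ makes these equal to $1,0,3,0,15$ for every $i$ (no loops, no odd closed walks of length $\le 3$, degree $3$, and $(A^4)_{ii}=3^2+6$), and solving against the five distinct eigenvalues returns the grouped weights $\tfrac1{30},\tfrac9{30},\tfrac{10}{30},\tfrac9{30},\tfrac1{30}$, confirming $p_{ij}=1/30$.

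The conclusion is then immediate from Theorem 1:
\[
  \mathcal{E}_G(v_i)
  =\frac{1}{30}\sum_{j=1}^{30}\lvert\lambda_j\rvert
  =\frac{1}{30}\bigl(1\cdot 3+9\cdot 2+10\cdot 0+9\cdot 2+1\cdot 3\bigr)
  =\frac{42}{30}=1.4
\]
for all $i=1,\dots,30$; in fact the value is exact, so the ``$\approx$'' in the statement may be replaced by ``$=$''.

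I do not anticipate a genuine obstacle: the only substantive ingredients are the vertex‑transitivity of the Tutte–Coxeter graph and the correct eigenvalue multiplicities, both of which are classical. The one place to be careful is the multiplicity bookkeeping — obtaining $10$ (and not, say, $12$) for the eigenvalue $0$ — which is exactly what the two sanity checks above guard against; everything after that is a one‑line arithmetic evaluation.
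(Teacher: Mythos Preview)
Your proposal is correct and follows essentially the same approach as the paper: both rely on vertex-transitivity to make all vertex energies equal, compute the grouped weights $\tfrac{1}{30},\tfrac{9}{30},\tfrac{10}{30},\tfrac{9}{30},\tfrac{1}{30}$ from the moment equations $\varphi_i(A^k)=1,0,3,0,15$ against the five distinct eigenvalues $-3,-2,0,2,3$, and then evaluate Theorem~1. Your write-up is in fact a bit tighter than the paper's --- you observe that the value $42/30=1.4$ is exact rather than approximate, and you add the sanity checks on the multiplicities --- but the underlying argument is the same.
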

\begin{proof}
The characteristic polynomial of the Tutte-Coxeter graph \( G \) \cite{10} is given by 
\[
\phi(G; \lambda) = (\lambda +3)(\lambda +2)^9(\lambda-0)^{10}(\lambda -2)^9(\lambda - 3).
\]    

Let \( p_{11}, p_{12}, p_{13}, p_{14}, p_{15}\) denote the weights of the vertex \( v_1 \) in the graph \( G \). Then, according to Theorem 2, the following system of linear equations is obtained:
\begin{align*}
p_{11} + p_{12} + p_{13} + p_{14} + p_{15} &= 1, \\
(-3) \, p_{11} + (-2) \, p_{12} + 0 \, p_{13} + (2) \, p_{14} + (3) \, p_{15} &= 0, \\
(-3)^2 \, p_{11} + (-2)^2 \, p_{12} + 0 \, p_{13} + (2)^2 \, p_{14} + (3)^2 \, p_{15}&= 3, \\
(-3)^3\, p_{11} + (-2)^3 \, p_{12} + 0 \, p_{13} + (2)^3 \, p_{14} + (3)^3 \,p_{15}&= 0, \\
(-3)^4\, p_{11} + (-2)^4 \, p_{12} + 0 \, p_{13} + (2)^4 \, p_{14} +(3)^4 \, p_{15}&= 15.
\end{align*}
On solving these, $p_{11} = 0.03333, p_{12} = 0.3, p_{13} = 0.33333, p_{14} = 0.3, p_{15}=0.03333.$

Therefore, by Theorem 1, the vertex energy of \( v_1 \) in \( G \) is given by
\begin{align*}
\mathcal{E}_{G}(v_1) &= p_{11}|\lambda_1| + p_{12}|\lambda_2| + p_{13}|\lambda_3| +p_{14}|\lambda_4|+p_{15}|\lambda_5|\\
&= 0.03333 |-3| + 0.3|-2| + 0.33333|0|+ 0.3|2|+0.03333|3| \\
& \approx 1.4.
\end{align*}

This implies that \( \mathcal{\mathcal{E}}_{(G)}(v_i) \approx 1.4 \) holds for all \( i = 1, 2, \ldots, 30 \).
\end{proof}

\subsection{Vertex energies of Heawood graph}
\begin{theorem}
Let $G$ be the Heawood graph with vertex set \( V(G) = \{v_1, v_2, \ldots, v_{14}\} \). Then $\mathcal{E}_{G}(v_{i}) \approx 1.6407, \, 1 \leq i \leq 14$.    
\end{theorem}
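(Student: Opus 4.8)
The plan is to treat the Heawood graph $G$ exactly as the Desargues and Tutte–Coxeter cases were treated, using two structural facts: $G$ is $3$-regular on $n=14$ vertices, and it is vertex-transitive (it is the point–line incidence graph of the Fano plane $PG(2,2)$, i.e.\ the unique $(3,6)$-cage). First I would record its spectrum. As the incidence graph of $PG(2,2)$, the characteristic polynomial is
\[
\phi(G;\lambda)=(\lambda-3)(\lambda+3)(\lambda^2-2)^6,
\]
so the distinct eigenvalues are $-3,-\sqrt2,\sqrt2,3$ with multiplicities $1,6,6,1$; in particular $G$ is bipartite and its spectrum is symmetric about $0$.

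Next, mirroring Theorem 2, I would write the $4\times4$ linear system for the weights $p_{1,1},p_{1,2},p_{1,3},p_{1,4}$ at $v_1$ associated to those four distinct eigenvalues, with right-hand sides the closed $v_1$–$v_1$ walk counts of lengths $k=0,1,2,3$: namely $\varphi_1(A^0)=1$, $\varphi_1(A^1)=0$ (no loops), $\varphi_1(A^2)=3$ (the vertex degree), and $\varphi_1(A^3)=0$ (girth $6$ forbids closed $3$-walks). Since the eigenvalues occur in the pairs $\pm3$ and $\pm\sqrt2$, the two odd-moment equations together force $p_{1,1}=p_{1,4}$ and $p_{1,2}=p_{1,3}$, and the even-moment equations collapse to $2p_{1,1}+2p_{1,2}=1$ and $18p_{1,1}+4p_{1,2}=3$, yielding $p_{1,1}=p_{1,4}=\tfrac1{14}$ and $p_{1,2}=p_{1,3}=\tfrac37$. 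Then Theorem 1 gives
\[
\mathcal{E}_G(v_1)=\tfrac1{14}\cdot3+\tfrac37\cdot\sqrt2+\tfrac37\cdot\sqrt2+\tfrac1{14}\cdot3=\frac{3+6\sqrt2}{7}\approx1.6407.
\]

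To upgrade this to all vertices, I would invoke the vertex-transitive reduction recorded just before the Desargues subsection: in a vertex-transitive graph the weights are independent of $i$ and, together with $\sum_i p_{ij}=1$, force $p_{ij}=1/n$, hence $\mathcal{E}_G(v_i)=E(G)/n$ for every $i$. Here $E(G)=2\cdot3+12\sqrt2=6+12\sqrt2$ and $n=14$, so $\mathcal{E}_G(v_i)=(6+12\sqrt2)/14=(3+6\sqrt2)/7\approx1.6407$ for $1\le i\le14$, as claimed.

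I do not anticipate a genuine obstacle; the one point deserving a sentence of care is the same as in the earlier proofs — a square moment system by itself does not determine the $p_{ij}$ without eigenvector data — so the argument really rests on vertex-transitivity (equivalently, here, on the sign-symmetry of the spectrum combined with the moment equations). As a consistency check one can verify that the computed weights reproduce the next moment, $\varphi_1(A^4)=15$, which for a $3$-regular girth-$6$ graph equals $3^2+6$, confirming there is no hidden inconsistency.
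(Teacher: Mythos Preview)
Your proposal is correct and follows essentially the same route as the paper: record the spectrum $\{-3,-\sqrt2,\sqrt2,3\}$ with multiplicities $1,6,6,1$, set up and solve the $4\times4$ moment system at $v_1$ with right-hand sides $1,0,3,0$ to obtain the weights $\tfrac1{14},\tfrac37,\tfrac37,\tfrac1{14}$, compute $\mathcal{E}_G(v_1)=(3+6\sqrt2)/7\approx1.6407$, and then invoke vertex-transitivity to extend to all $v_i$. Your exact fractions, the symmetry observation, and the $k=4$ consistency check are welcome additions but do not change the underlying argument; one small remark is that your caveat about the moment system is unnecessary here, since with four \emph{distinct} eigenvalues the $4\times4$ Vandermonde system is invertible and determines the eigenspace weights on its own.
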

\begin{proof}
The characteristic polynomial of the Heawood graph \( G \) \cite{11} is given by 
\[
\phi(G; \lambda) = (\lambda +3)(\lambda + \sqrt{2})^6(\lambda - \sqrt{2})^6(\lambda - 3).
\]    

Let \( p_{11}, p_{12}, p_{13}, p_{14}\) be the weights of the vertex \( v_1 \) in \( G \). Then, by Theorem 2, we have the following system of linear equations:
\begin{align*}
p_{11} + p_{12} + p_{13} + p_{14} &= 1, \\
(-3) \, p_{11} + (-\sqrt{2}) \, p_{12} + \sqrt{2} \, p_{13} + 3 \, p_{14} &= 0, \\
(-3)^2 \, p_{11} + (-\sqrt{2})^2 \, p_{12} + (\sqrt{2})^2 \, p_{13} + (3^2) \, p_{14} &= 3, \\
(-3)^3\, p_{11} + (-\sqrt{2})^3 \, p_{12} + (\sqrt{2})^3 \, p_{13} + (3^3) \, p_{14} &= 0.
\end{align*}
On solving these, $p_{11} = 0.07143, \quad p_{12} = 0.42857, \quad p_{13} = 0.42857, \quad p_{14} = 0.07143.$

Therefore, by Theorem 1, the vertex energy of \( v_1 \) in \( G \) is given by
\begin{align*}
\mathcal{E}_{G}(v_1) &= p_{11}|\lambda_1| + p_{12}|\lambda_2| + p_{13}|\lambda_3| +p_{14}|\lambda_4|\\
&= 0.07143 |-3| + 0.42857|-\sqrt{2}| + 0.42857|\sqrt{2}|+ 0.07143|3| \\
& \approx 1.6407.
\end{align*}

This implies that \( \mathcal{\mathcal{E}}_{(G)}(v_i) \approx 1.6407 \) holds for all \( i = 1, 2, \ldots, 14 \).
\end{proof}

\subsection{Vertex energies of Shrikhande graph}
\begin{theorem}
Let $G$ be the Shrikhande graph with vertex set \( V(G) = \{v_1, v_2, \ldots, v_{16}\} \). Then $\mathcal{E}_{G}(v_{i}) = 2.25, \, 1 \leq i \leq 16$.    
\end{theorem}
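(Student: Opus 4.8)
The plan is to follow the same template used for the Desargues, Tutte--Coxeter and Heawood graphs, exploiting the fact that the Shrikhande graph is vertex-transitive (it is the Cayley graph of $\mathbb{Z}_{4}\times\mathbb{Z}_{4}$ with connection set $\{\pm(1,0),\pm(0,1),\pm(1,1)\}$). By the discussion of vertex-transitive graphs given after Theorem 3, every weight $p_{ij}$ depends only on $j$, the common per-vertex energy equals $E(G)/16$, and it therefore suffices to carry out the computation for the single vertex $v_1$. First I would record the spectrum: the Shrikhande graph is strongly regular with parameters $(16,6,2,2)$, so its characteristic polynomial is
\[
  \phi(G;\lambda)=(\lambda+2)^{9}(\lambda-2)^{6}(\lambda-6),
\]
that is, the eigenvalues are $\lambda_{1}=-2$ (multiplicity $9$), $\lambda_{2}=2$ (multiplicity $6$), and $\lambda_{3}=6$ (multiplicity $1$); the multiplicities follow from the standard strongly-regular-graph relations $f+g=n-1$ and $k+rf+sg=0$, or simply by reading off the exponents in $\phi(G;\lambda)$.

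Next I would set up the moment system of Theorem 2 for $v_1$. Since there are only three distinct eigenvalues, the first three moments suffice: $\varphi_{1}(A^{0})=1$, $\varphi_{1}(A^{1})=0$ (the graph has no loops), and $\varphi_{1}(A^{2})=6$ (the number of closed walks of length $2$ at $v_1$ equals its degree). This yields
\begin{align*}
  p_{11}+p_{12}+p_{13} &= 1,\\
  (-2)\,p_{11}+2\,p_{12}+6\,p_{13} &= 0,\\
  (-2)^{2}\,p_{11}+2^{2}\,p_{12}+6^{2}\,p_{13} &= 6,
\end{align*}
whose unique solution is $p_{11}=\tfrac{9}{16}=0.5625$, $p_{12}=\tfrac{6}{16}=0.375$, $p_{13}=\tfrac{1}{16}=0.0625$ --- exactly the weights proportional to the eigenvalue multiplicities predicted by vertex-transitivity. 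Then by Theorem 1,
\[
  \mathcal{E}_{G}(v_1)=\tfrac{9}{16}\,|-2|+\tfrac{6}{16}\,|2|+\tfrac{1}{16}\,|6|
  =\frac{18+12+6}{16}=\frac{36}{16}=2.25,
\]
and vertex-transitivity gives $\mathcal{E}_{G}(v_i)=2.25$ for every $i=1,\dots,16$.

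There is no genuine analytic obstacle here; the two points that deserve care are (i) justifying that the Shrikhande graph is vertex-transitive, so that the uniform-weight argument of the preceding discussion applies and the answer is independent of $i$, and (ii) pinning down the eigenvalue multiplicities $9,6,1$ correctly. With those in hand, the system is a $3\times3$ linear system with distinct Vandermonde nodes $-2,2,6$ and is solved immediately. A convenient consistency check is that $E(G)=1\cdot 6+6\cdot 2+9\cdot 2=36$, so $E(G)/16=2.25$, matching the claimed common value.
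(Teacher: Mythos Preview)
Your proposal is correct and follows essentially the same route as the paper: quote the characteristic polynomial $(\lambda+2)^{9}(\lambda-2)^{6}(\lambda-6)$, set up the $3\times 3$ moment system for $v_1$ using $\varphi_1(A^0)=1$, $\varphi_1(A^1)=0$, $\varphi_1(A^2)=6$, solve to obtain $p_{11}=0.5625$, $p_{12}=0.375$, $p_{13}=0.0625$, compute $\mathcal{E}_G(v_1)=2.25$, and then invoke vertex-transitivity for the remaining vertices. Your write-up is slightly more explicit about why vertex-transitivity holds (the Cayley-graph description) and adds the sanity check $E(G)/16=36/16=2.25$, but the argument is the same.
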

\begin{proof}
The characteristic polynomial of the Shrikhande graph \( G \) \cite{12} is given by 
\[
\phi(G; \lambda) = (\lambda +2)^9(\lambda -2)^6(\lambda-6).
\]    

Let \( p_{11}, p_{12}, p_{13}\) be the weights of the vertex \( v_1 \) in \( G \). Then, by Theorem 2, we have the following system of linear equations:
\begin{align*}
p_{11} + p_{12} + p_{13} &= 1, \\
(-2) \, p_{11} + (2) \, p_{12} + 6 \, p_{13} &= 0, \\
(-2)^2 \, p_{11} + (2)^2 \, p_{12} + (6)^2 \, p_{13} + &= 6.
\end{align*}
On solving these, $p_{11} = 0.5625, p_{12} = 0.375, p_{13} = 0.0625.$

Therefore, by Theorem 1, the vertex energy of \( v_1 \) in \( G \) is given by
\begin{align*}
\mathcal{E}_{G}(v_1) &= p_{11}|\lambda_1| + p_{12}|\lambda_2| + p_{13}|\lambda_3| \\
&= 0.5625 |-2| + 0.375|2| + 0.0625|6| \\
& = 2.25.
\end{align*}

This implies that \( \mathcal{\mathcal{E}}_{(G)}(v_i) = 2.25 \) holds for all \( i = 1, 2, \ldots, 16 \).
\end{proof}

\subsection{Vertex energies of Petersen graph}
\begin{theorem}
Let $G$ be the Petersen graph with vertex set \( V(G) = \{v_1, v_2, \ldots, v_{10}\} \). Then $\mathcal{E}_{G}(v_{i})=1.6, \, 1 \leq i \leq 10$.     
\end{theorem}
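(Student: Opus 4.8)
The plan is to follow exactly the pattern used above for the Desargues, Tutte--Coxeter, Heawood, and Shrikhande graphs, exploiting the fact that the Petersen graph is vertex-transitive. First I would record that the Petersen graph is $3$-regular on $n = 10$ vertices with characteristic polynomial
\[
\phi(G;\lambda) = (\lambda - 3)(\lambda - 1)^5(\lambda + 2)^4,
\]
so its distinct eigenvalues are $\lambda_1 = 3$, $\lambda_2 = 1$, $\lambda_3 = -2$.

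Next, since there are only three distinct eigenvalues, Theorem 2 yields a $3 \times 3$ linear system for the weights $p_{11}, p_{12}, p_{13}$ attached to $v_1$, obtained from the moments $\varphi_1(A^k)$ for $k = 0, 1, 2$. Recalling that $\varphi_1(A^k)$ counts closed $v_1$-$v_1$ walks of length $k$, these right-hand sides are $1$, $0$, and $3$ (the last using $3$-regularity), giving
\begin{align*}
p_{11} + p_{12} + p_{13} &= 1,\\
3p_{11} + p_{12} - 2p_{13} &= 0,\\
9p_{11} + p_{12} + 4p_{13} &= 3.
\end{align*}
Solving yields $p_{11} = \tfrac{1}{10}$, $p_{12} = \tfrac{1}{2}$, $p_{13} = \tfrac{2}{5}$; these are precisely the eigenvalue multiplicities divided by $n$, as is forced by vertex-transitivity together with the column-sum identity $\sum_i p_{ij} = 1$ of Theorem 1.

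Finally, I would apply Eq. (1):
\[
\mathcal{E}_G(v_1) = \tfrac{1}{10}|3| + \tfrac{1}{2}|1| + \tfrac{2}{5}|-2| = 0.3 + 0.5 + 0.8 = 1.6,
\]
and then invoke vertex-transitivity (equivalently, note $E(G) = 3 + 5\cdot 1 + 4\cdot 2 = 16$ and $\mathcal{E}_G(v_i) = E(G)/n$ for every $i$ in a vertex-transitive graph, as established in the discussion preceding this subsection) to conclude $\mathcal{E}_G(v_i) = 1.6$ for all $1 \leq i \leq 10$. There is no genuine obstacle here: the only point requiring care is checking that the stated weights simultaneously satisfy both normalization identities of Theorem 1 and the moment equations of Theorem 2, which is a routine verification, together with citing the correct spectrum of the Petersen graph.
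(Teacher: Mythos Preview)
Your proposal is correct and follows essentially the same approach as the paper: set up the $3\times 3$ moment system from Theorem~2 using the Petersen spectrum $\{3,1,-2\}$, solve for the weights $\tfrac{1}{10},\tfrac{1}{2},\tfrac{2}{5}$, apply Eq.~(1) to get $1.6$, and extend to all vertices by vertex-transitivity. The only cosmetic difference is your ordering of the eigenvalues $1$ and $-2$ (and hence the labels on the corresponding weights), plus your added sanity check $E(G)/n=16/10$, neither of which changes the argument.
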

\begin{proof}
The characteristic polynomial of the Petersen graph \( G \) \cite{13} is given by 
\[
\phi(G; \lambda) = (\lambda - 3)(\lambda + 2)^4(\lambda - 1)^5.
\]

Let \( p_{11}, p_{12}, p_{13}\) be the weights of the vertex \( v_1 \) in \( G \). Then, by Theorem 2, we have the following system of linear equations:
\begin{align*}
p_{11} + p_{12} + p_{13} &= 1, \\
3 \, p_{11} + (-2) \, p_{12} + p_{13} &= 0, \\
3^2\, p_{11} + (-2)^2 p_{12} + p_{13} &= 3.
\end{align*}
Solving this system, we obtain: $p_{11} = 0.1, \quad p_{12} = 0.4, \quad p_{13} = 0.5.$

Therefore, by Theorem 1, the vertex energy of \( v_1 \) in \( G \) is given by
\begin{align*}
\mathcal{E}_{G}(v_1) &= p_{11}|\lambda_1| + p_{12}|\lambda_2| + p_{13}|\lambda_3| \\
&= 0.1 |3| + (0.4)|-2| + (0.5)|1| \\
& = 1.6.
\end{align*}

This implies that \( \mathcal{\mathcal{E}}_{(G)}(v_i) = 1.6 \) holds for all \( i = 1, 2, \ldots, 10 \).  
\end{proof}

\section{Conclusion}
In this paper, we studied the vertex energy of regular graphs, such as the Frucht graph, Desargues graph, Tutte-Coxeter graph, Heawood graph, Shrikhande graph, and Petersen graph. We analyzed their adjacency matrices to calculate their vertex energies. Although we made progress, vertex energy is still not widely studied in spectral graph theory.
\newpage
A key limitation in our study lies in the challenge of determining the spectral contribution or weight associated with each vertex. While the vertex energy is defined via the diagonal entries of $|A| = (AA^*)^{1/2}$, interpreting or isolating the influence of individual eigenvalues or eigenvectors on a specific vertex remains nontrivial. These weights are often difficult to compute analytically, especially for large or highly symmetric graphs, and require careful numerical approximation or matrix decomposition techniques. Furthermore, the lack of closed-form expressions for vertex weights limits our ability to generalize results or compare across different graph families.

Future research could explore vertex energy in other types of graphs, such as strongly regular and distance-regular graphs. It would also be useful to study how vertex energy relates to other graph properties, like eigenvalue gaps, resistance distance, and nodal domains. Since this topic is not well explored, further studies could lead to discoveries and applications in fields like chemistry, physics, and network science. A better understanding of vertex weights and their relation to graph symmetries could pave the way for new invariants and structural descriptors in spectral graph theory.


\end{document}